\newtheorem{theorem}{Theorem}[section]
\newtheorem{proposition}[theorem]{Proposition}
\newtheorem{lemma}[theorem]{Lemma}
\newtheorem{conjecture}[theorem]{Conjecture}
\theoremstyle{definition}
\newtheorem{case}{Case}
\theoremstyle{plain}
\numberwithin{equation}{theorem}
\theoremstyle{remark}
\newtheorem{remark}[theorem]{Remark}
\newcommand{\Fp}{{\mathbb F}_p}
\newcommand{\Q}{{\mathbb Q}}
\newcommand{\Z}{{\mathbb Z}}
\newcommand{\N}{{\mathbb N}}
\newcommand{\OO}{{\mathcal O}}
\newcommand{\id}{\mathrm{id}}
\newcommand{\Qbar}{\overline{\Q}}
\DeclareMathOperator{\Gal}{Gal}
\newcommand{\bP}{{\mathbb P}}
\newcommand{\bG}{{\mathbb G}}
\newcommand{\bM}{{\mathbb M}}
\newcommand{\bA}{{\mathbb A}}
\newcommand{\lra}{\longrightarrow}
\newcommand{\cH}{\mathscr{H}}
\newcommand{\cP}{\mathscr{P}}
\DeclareMathOperator{\diag}{diag}
\newif\ifhascomments \hascommentstrue
  \newcommand{\dragos}[1]{{\color{red}[[\ensuremath{\bigstar\bigstar\bigstar} #1]]}}
  \newcommand{\matt}[1]{{\color{red}[[\ensuremath{\spadesuit\spadesuit\spadesuit} #1]]}}
  \newcommand{\dragos}[1]{}
  \newcommand{\matt}[1]{}
\newcommand{\bk}{\mathds k}
\newcommand{\arxiv}[1]{\href{https://arxiv.org/abs/#1}{{\tt arXiv:#1}}}
\begin{document}

\title[Density of orbits inside $\bG_a^k\times \bG_m^\ell$]{Density of orbits of endomorphisms of commutative linear algebraic groups}

\author{Dragos Ghioca}
\author{Fei Hu}

\address{Department of Mathematics \\
University of British Columbia\\
Vancouver, BC V6T 1Z2\\
Canada}
\email{dghioca@math.ubc.ca}

\address{ Department of Mathematics \\
University of British Columbia\\
Vancouver, BC V6T 1Z2\\
Canada 
}
\email{fhu@math.ubc.ca}

\begin{abstract}
We prove a conjecture of Medvedev and Scanlon for endomorphisms of connected commutative linear algebraic groups $G$ defined over an algebraically closed field $\bk$ of characteristic $0$. That is, if $\Phi\colon G\lra G$ is a dominant endomorphism, we prove that one of the following holds: either there exists a non-constant rational function $f\in \bk(G)$ preserved by $\Phi$ (i.e., $f\circ \Phi = f$), or there exists a point $x\in G(\bk)$ whose $\Phi$-orbit is Zariski dense in $G$.
\end{abstract}

\subjclass[2010]{
37P15, 
20G15, 
32H50. 
}

\keywords{algebraic dynamics, orbit closure, Medvedev--Scanlon conjecture}

\thanks{The first author was partially supported by a Discovery Grant from the National Sciences and Engineering Research Council of Canada. The second author was partially supported by a UBC-PIMS Postdoctoral Fellowship.}

\maketitle


\section{Introduction}
\label{intro section}

\noindent
Throughout our paper, we work over an algebraically closed field $\bk$ of characteristic $0$. Let $\N$ denote the set of positive integers and $\N_0 \coloneqq \N\cup\{0\}$.
For any self-map $\Phi$ on a set $X$, and any $n\in\N_0$, we denote by $\Phi^n$ the $n$-th compositional power, where $\Phi^0$ is the identity map. For any $x\in X$, we denote by $\OO_\Phi(x)$ its forward orbit under $\Phi$, i.e., the set of all iterates $\Phi^n(x)$ for $n\in\N_0$.
An {\it endomorphism} of an algebraic group $G$ is defined as a self-morphism of $G$ in the category of algebraic groups.

Our main result is the following.

\begin{theorem}
\label{main result}
Let $G$ be a connected commutative linear algebraic group defined over an algebraically closed field $\bk$ of characteristic $0$, and $\Phi \colon G\lra G$ a dominant endomorphism. Then either there exists a point $x\in G(\bk)$ such that $\OO_\Phi(x)$ is Zariski dense in $G$, or there exists a non-constant rational function $f\in \bk(G)$ such that $f\circ \Phi = f$.
\end{theorem}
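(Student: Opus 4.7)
The plan is to exploit the classification of commutative linear algebraic groups: over an algebraically closed field $\bk$ of characteristic zero, every connected commutative linear algebraic group $G$ is isomorphic to $\bG_a^k \times \bG_m^\ell$, and because there are no non-trivial algebraic group homomorphisms between $\bG_a$ and $\bG_m$ in either direction, any endomorphism $\Phi$ of $G$ decomposes as $\Phi_a \times \Phi_m$, with $\Phi_a\colon \bG_a^k\to\bG_a^k$ given by a matrix $A\in M_k(\bk)$ acting linearly and $\Phi_m\colon \bG_m^\ell\to\bG_m^\ell$ given by a matrix $B\in M_\ell(\bZ)$ acting monomially. Dominance forces $\det A\neq 0$ and $\det B\neq 0$. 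My strategy is to show that if $\bk(G)^\Phi = \bk$, then one can construct a point $(x_0,y_0)\in G(\bk)$ with Zariski-dense $\Phi$-orbit.

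The coordinate ring $\bk[G] \cong \bk[x_1,\ldots,x_k]\otimes_\bk \bk[y_1^{\pm1},\ldots,y_\ell^{\pm1}]$ carries a natural $\bZ^\ell$-grading by the character lattice $X(\bG_m^\ell)$. Expanding a rational invariant as $f = \sum_\gamma p_\gamma(x)\,y^\gamma$, $\Phi$-invariance translates into the system of relations $p_\gamma \circ A = p_{B^T\gamma}$ for all $\gamma\in\bZ^\ell$, coupling the $A$-action on polynomials with the $B^T$-orbits on $\bZ^\ell$. Thus $\bk(G)^\Phi = \bk$ amounts to an explicit condition on the joint eigenvalue structure of $A$ and the periodic lattice points of $B^T$. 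For the torus factor alone, the condition on $B$ can be handled by existing techniques for monomial self-maps of $\bG_m^\ell$ combined with Kronecker-type multiplicative independence of algebraic numbers, yielding a point $y_0\in\bG_m^\ell(\bk)$ with dense $\Phi_m$-orbit. For the unipotent factor alone, a cyclic-vector construction based on the Jordan form of $A$ yields $x_0\in\bG_a^k(\bk)$ with dense $\Phi_a$-orbit.

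The main obstacle is ruling out mixed couplings between the two factors: even with dense orbits on each factor separately, the pair $(x_0,y_0)$ may lie on a proper $\Phi$-invariant subvariety cut out by a mixed relation of the form $p(x)\,y^\gamma = \text{const}$, where $p\in\bk[\bG_a^k]$ is an $A$-eigenvector whose eigenvalue is cancelled by the $B^T$-action on $y^\gamma$ along a $B^T$-periodic $\gamma$. The technical heart of the argument will be to show that any such coupling itself contributes a non-constant element to $\bk(G)^\Phi$, contradicting the hypothesis; this uses the fact that the eigenvalues of $A$ live in the field $\bk$ while the characters of $B$ come from the discrete lattice $\bZ^\ell$, so their spectral data can only interact through algebraic relations detectable by the invariant ring. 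After eliminating these couplings, a specialization argument, descending to a finitely generated subfield of $\bk$ to make multiplicative independence precise, should produce the required dense-orbit point $(x_0,y_0)\in G(\bk)$.
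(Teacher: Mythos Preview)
Your setup matches the paper's: the decomposition $G\cong\bG_a^k\times\bG_m^\ell$, the splitting $\Phi=\Phi_a\times\Phi_m$ into a linear map and a monomial map, and the specialization to a finitely generated subfield are all there. You have also correctly located the main difficulty in the ``mixed couplings'' between the two factors.

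The gap is in how you propose to resolve that difficulty. You claim that if the orbit of $(x_0,y_0)$ is not dense then it lies on a $\Phi$-invariant hypersurface of the special form $p(x)\,y^\gamma=\text{const}$, with $p$ an $A$-eigenvector and $\gamma$ a $B^T$-periodic lattice vector, and that this relation then manufactures an element of $\bk(G)^\Phi$. Neither implication is justified: an arbitrary polynomial vanishing on the orbit need not be $\Phi$-invariant nor decompose into pieces of that shape, and your grading analysis $p_\gamma\circ A=p_{B^T\gamma}$ only applies to regular functions, not to rational invariants. The sentence about spectral data being ``detectable by the invariant ring'' is a restatement of the goal, not a method. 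The paper proceeds very differently at this point: after choosing a point $\alpha$ with $\bG_a$-coordinates $(1,\dots,1)$ and $\bG_m$-coordinates multiplicatively independent from the eigenvalues of $A$, it writes the condition $F(\Phi^n(\alpha))=0$ explicitly as a polynomial-exponential equation in $n$ and in the entries of $B^n$, applies Laurent's theorem on such equations to constrain the solutions, and combines this with a Kronecker-type growth estimate to force $B$ to have a root-of-unity eigenvalue, contradicting the no-invariant hypothesis on $\Phi_m$. Even the pure $\bG_a^k$ case in the paper relies on the Skolem--Mahler--Lech theorem for non-degenerate linear recurrences rather than on a bare cyclic-vector argument. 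These number-theoretic inputs are the actual technical heart, and your plan does not supply any substitute for them.
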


Theorem~\ref{main result} answers affirmatively the following conjecture raised by Medvedev and Scanlon in \cite{medvedev-scanlon} for the case of endomorphisms of $\bG_a^k\times\bG_m^\ell$.
Note that any connected commutative linear algebraic group splits over an algebraically closed field $\bk$ of characteristic $0$ as a direct product of its largest unipotent subgroup (which is in our case a vector group, i.e., the additive group $\bG_a^k$ of a finite-dimensional $\bk$-vector space) with an algebraic torus $\bG_m^\ell$.

\begin{conjecture}[{cf. \cite[Conjecture~7.14]{medvedev-scanlon}}]
\label{M-S conjecture}
Let $X$ be a quasi-projective variety defined over an algebraically closed field $\bk$ of characteristic $0$, and $\varphi \colon X\dashrightarrow X$ a dominant rational self-map. Then either there exists a point $x\in X(\bk)$ whose orbit under $\varphi$ is Zariski dense in $X$, or $\varphi$ preserves a non-constant rational function $f\in \bk(X)$, i.e., $f \circ \varphi = f$.
\end{conjecture}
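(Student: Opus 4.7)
The stated Conjecture is open in general, and the present paper supplies only the case in which $X$ is a connected commutative linear algebraic group and $\varphi$ a group endomorphism, via Theorem~\ref{main result}. I therefore outline a plausible program toward the full statement rather than a closed proof.

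\textbf{Birational reduction and irreducibility.} Both alternatives in the conclusion are invariant under birational conjugation of the pair $(X,\varphi)$, so I would first replace $X$ by a smooth projective birational model and, after passing to an iterate, assume $\varphi$ is algebraically stable in the sense of Diller--Favre (or a higher-dimensional normal form). Assuming the second alternative fails, i.e.\ $\bk(X)^\varphi=\bk$, a Rosenlicht-type argument rules out any $\varphi$-equivariant dominant rational map $X\dashrightarrow Y$ onto a base of strictly smaller dimension; this plays the role of a dynamical irreducibility hypothesis and focuses the remaining work on producing a Zariski-dense orbit.

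\textbf{Descent to $\overline{\bQ}$ and height-theoretic existence.} Spreading $(X,\varphi)$ out over a finitely generated $\bZ$-algebra $R\subset\bk$ and specializing at a suitable closed point of $\Spec R$, a Bell--Ghioca--Tucker style transfer principle would lift a Zariski-dense orbit on the special fiber to one on the generic fiber via a parameter-space avoidance argument, reducing the problem to $\bk=\overline{\bQ}$. Over $\overline{\bQ}$ I would combine Amerik's theorem on the existence of non-preperiodic algebraic points with a Kawaguchi--Silverman canonical height attached to an ample $\varphi^*$-eigenclass on $\mathrm{NS}(X)_\bR$; Northcott finiteness together with a Hilbert-irreducibility / thin-set avoidance over the countable family of proper $\varphi$-periodic subvarieties then produces a point whose forward orbit meets no proper invariant closed subvariety. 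When the Rosenlicht reduction above terminates at a commutative linear algebraic group, Theorem~\ref{main result} supplies the base case directly; analogous terminal inputs are available from Medvedev--Scanlon for split polynomial maps on $(\bP^1)^n$ and from Ghioca--Scanlon for polarized endomorphisms of abelian varieties.

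\textbf{Main obstacle.} The decisive difficulty is the last step: outside the enumerated base cases there is neither a classification nor a general thinness principle for proper $\varphi$-periodic subvarieties of $X$, and moreover $\varphi^*$ need not admit any ample eigenclass with eigenvalue strictly greater than $1$, which obstructs the construction of a Kawaguchi--Silverman height at all. A complete proof will plausibly require new equidistribution input -- via pluripotential theory and dynamical currents after Dinh--Sibony, or arithmetic equidistribution after Yuan--Zhang -- together with a systematic dichotomy that reduces an arbitrary $(X,\varphi)$ to one of the known structural base cases in which enough invariant geometry is available to carry out the avoidance argument.
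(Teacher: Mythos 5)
You are right that Conjecture~\ref{M-S conjecture} is open: the paper does not prove it, but only records it as the motivating conjecture and establishes the special case of Theorem~\ref{main result} (dominant group endomorphisms of $G\cong\bG_a^k\times\bG_m^\ell$). So there is no ``paper proof'' to match, and your submission is, by your own account, a program rather than a proof; as such it cannot be accepted as a proof of the statement. Beyond that, two of the program's steps are problematic even as a program. First, the Rosenlicht-type claim is stated too strongly: the hypothesis $\bk(X)^\varphi=\bk$ does \emph{not} rule out $\varphi$-equivariant dominant rational maps $X\dashrightarrow Y$ with $0<\dim Y<\dim X$; it only rules out those for which the induced self-map of $Y$ itself pulls back a non-constant invariant function (e.g.\ the squaring map on $\bG_m^2$ preserves no non-constant rational function yet admits the equivariant projections to $\bG_m$). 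Second, the ``Hilbert-irreducibility / thin-set avoidance over the countable family of proper $\varphi$-periodic (or invariant) subvarieties'' is exactly the content of the conjecture over a countable field such as $\Qbar$: over uncountable fields this avoidance is the Amerik--Campana theorem, and over $\Qbar$ there is no known principle making that countable family avoidable, so the central step begs the question. Amerik's theorem only yields a non-preperiodic algebraic point, which is far weaker than a point with dense orbit, and, as you note yourself, the Kawaguchi--Silverman height need not exist without an ample eigenclass of eigenvalue $>1$.

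For contrast, the argument the paper actually gives for its special case is of a completely different, number-theoretic nature: after splitting $\Phi=\Phi_1\times\Phi_2$ and putting the linear part in Jordan form, Theorem~\ref{theorem G_a} produces the explicit point $(1,\dots,1)$ and shows density of its orbit via non-degenerate linear recurrence sequences (Schmidt), and the mixed case $\bG_a^k\times\bG_m^\ell$ is handled by choosing $\beta_1,\dots,\beta_\ell$ multiplicatively independent from the eigenvalues, applying Laurent's theorem on polynomial-exponential equations, and then deriving a contradiction from Proposition~\ref{prop:bounded} (an integer matrix $A$ with $A^n\cdot\vv{p}$ linearly bounded along an infinite set must have a root-of-unity eigenvalue). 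No heights, equidistribution, or algebraic stability enter. If you want to engage with what is actually proved here, that explicit construction is the argument to study; the general Conjecture~\ref{M-S conjecture} remains open and your outline, while a reasonable survey of available tools, does not close it.
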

With the notation as in Conjecture~\ref{M-S conjecture}, it is immediate to see that if $\varphi$ preserves a non-constant rational function, then there is no Zariski dense orbit. So, the real difficulty in Conjecture~\ref{M-S conjecture} lies in proving that there exists a Zariski dense orbit for a dominant rational self-map $\varphi$ of $X$, which preserves no non-constant rational function.

The origin of \cite[Conjecture~7.14]{medvedev-scanlon} lies in a much older conjecture formulated by Zhang in the early 1990s (and published in \cite[Conjecture~4.1.6]{Zhang-lec}). Zhang asked that for each polarizable endomorphism $\varphi$ of a projective variety $X$ defined over $\Qbar$ there must exist a $\Qbar$-point with Zariski dense orbit under $\varphi$. Medvedev and Scanlon \cite{medvedev-scanlon} conjectured that as long as $\varphi$ does not preserve a non-constant rational function, then a Zariski dense orbit must exist; the hypothesis concerning polarizability of $\varphi$ already implies that no non-constant rational function is preserved by $\varphi$. We describe below the known partial results towards Conjecture~\ref{M-S conjecture}.

\begin{enumerate}
\item In \cite{Amerik-Campana}, Amerik and Campana proved Conjecture~\ref{M-S conjecture} for all uncountable algebraically closed fields $\bk$ (see also \cite{BRS} for a proof of the special case of this result when $\varphi$ is an automorphism). In fact, Conjecture~\ref{M-S conjecture} is true even in positive characteristic, as long as the base field $\bk$ is uncountable (see \cite[Corollary~6.1]{preprint}); on the other hand, when the transcendence degree of $\bk$ over $\Fp$ is smaller than the dimension of $X$, there are counterexamples to the corresponding variant of Conjecture~\ref{M-S conjecture} in characteristic $p$ (as shown in \cite[Example~6.2]{preprint}).
\item In \cite{medvedev-scanlon}, Medvedev and Scanlon proved their conjecture in the special case $X = \bA^n_\bk$ and $\varphi$ is given by the coordinatewise action of $n$ one-variable polynomials $(x_1,\dots, x_n)\mapsto (f_1(x_1),\dots, f_n(x_n))$; their result was established over an arbitrary field $\bk$ of characteristic $0$ which is not necessarily algebraically closed.
\item Conjecture~\ref{M-S conjecture} is known for all projective varieties of positive Kodaira dimension; see for example \cite[Proposition~2.3]{preprint 2}.
\item In \cite{Xie-1}, Conjecture~\ref{M-S conjecture} was proven for all birational automorphisms of surfaces (see also \cite{BGT} for an independent proof of the case of automorphisms). Later, Xie \cite{Xie-2} established the validity of Conjecture~\ref{M-S conjecture} for all polynomial endomorphisms of $\bA^2_\bk$.
\item In \cite{preprint 2}, the conjecture was proven for all smooth minimal $3$-folds of Kodaira dimension $0$ with sufficiently large Picard number, contingent on certain conjectures in the Minimal Model Program.
\item In \cite{GS-abelian}, Conjecture~\ref{M-S conjecture} was proven for all abelian varieties; later this result was extended to dominant regular self-maps for all semi-abelian varieties (see \cite{GS-semiabelian}).
\item In \cite{G-X}, it was proven that if Conjecture~\ref{M-S conjecture} holds for the dynamical system $(X,\varphi)$, then it also holds for the dynamical system $(X \times \bA^k_\bk, \psi)$, where $\psi \colon X \times \bA^k_\bk \dashrightarrow X \times \bA^k_\bk$ is given by $(x, y)\mapsto (\varphi(x), A(x)y)$ and $A\in {\rm GL}_k(\bk(X))$.
\end{enumerate}

We note that combining the results of \cite{GS-semiabelian} (which, in particular, proves Conjecture~\ref{M-S conjecture} when $X = \bG_m^\ell$) with the results of \cite{G-X}, one recovers our Theorem~\ref{main result}. However, our proof of Theorem~\ref{main result} avoids the more complicated arguments from algebraic geometry which were used in the proofs from \cite{G-X} and instead we use mainly number-theoretic tools, employing in a crucial way a theorem of Laurent \cite{Inventiones} regarding polynomial-exponential equations. So, with this new tool which we bring to the study of the Medvedev--Scanlon conjecture, we are able to construct explicitly points with Zariski dense orbits (which is not available in \cite{G-X}). Besides the intrinsic interest in our new approach, as part of our proof, we also obtain in Theorem~\ref{theorem G_a} a more precise result of when a linear transformation has a Zariski dense orbit.


\section{Proof of main results}
\label{section proof} 

We start by proving the following more precise version of the special case in Theorem~\ref{main result} when $G$ is a connected commutative unipotent algebraic group over $\bk$, i.e., $G = \bG_a^k$ for some $k \in \N$.

\begin{theorem}
\label{theorem G_a}
Let $\Phi \colon \bG_a^k\lra \bG_a^k$ be a dominant endomorphism defined over an algebraically closed field $\bk$ of characteristic $0$. Then the following are equivalent:
\begin{enumerate}
\item $\Phi$ preserves a non-constant rational function.
\item There is no $\alpha\in \bG_a^k(\bk)$ whose orbit under $\Phi$ is Zariski dense in $\bG_a^k$. 
\item The matrix $A$ representing the action of $\Phi$ on $\bG_a^k$ is either diagonalizable with multiplicatively dependent eigenvalues, or it has at most $k-2$ multiplicatively independent eigenvalues.
\end{enumerate}
\end{theorem}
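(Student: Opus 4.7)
The implication (i)$\Rightarrow$(ii) is immediate, since every orbit is contained in a level set of an invariant. The substance of the theorem is (iii)$\Rightarrow$(i)---constructing an invariant when the structural hypothesis on $A$ holds---together with the contrapositive of (ii)$\Rightarrow$(iii): if (iii) fails, exhibit a point whose orbit is Zariski dense. After a linear change of basis we may assume $A$ is in Jordan canonical form over $\bk$, and we let $\Lambda\subseteq\bk^*$ denote the subgroup generated by the eigenvalues (with multiplicity).

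For (iii)$\Rightarrow$(i), the plan is to exhibit the invariant explicitly, splitting on the Jordan type of $A$. If $A$ is diagonalizable with a multiplicative relation $\prod_i\lambda_i^{n_i}=1$ and one writes $n_i=n_i^+-n_i^-$ with $n_i^\pm\in\N_0$, then $f=\prod_i x_i^{n_i^+}\big/\prod_i x_i^{n_i^-}$ satisfies $f\circ\Phi=f$ and is non-constant. If $A$ is not diagonalizable and $\rank\Lambda\le k-2$, let $z_1,\ldots,z_b$ be the coordinate functions dual to the top eigenvectors of the $b$ Jordan blocks, so that $z_i\circ\Phi=\nu_i z_i$ with $\nu_i$ the eigenvalue of block $i$. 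When the multiset $\{\nu_1,\ldots,\nu_b\}$ admits a nontrivial multiplicative relation, the same monomial construction in the $z_i$'s yields an invariant. Otherwise the $\nu_i$'s are distinct and multiplicatively independent, and the equality $b=\rank\Lambda\le k-2$ forces $\sum_i(d_i-1)\ge 2$, where $d_i$ is the size of the $i$-th block. Two situations can then occur: if two blocks have size at least $2$, with top and next-to-top coordinates $(z_i,w_i),(z_j,w_j)$, then $(w_i/z_i)\circ\Phi=w_i/z_i+\nu_i^{-1}$, and the combination $\nu_i w_i/z_i-\nu_j w_j/z_j$ is a non-constant invariant; if instead a single block has size $d\ge 3$ with top three coordinates $z,w,v$, setting $U=\nu w/z$ and $V=\nu^2 v/z$ one checks $U\circ\Phi=U+1$ and $V\circ\Phi=V+U$, so $V-\binom{U}{2}$ is a non-constant invariant.

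For the converse, the failure of (iii) is either (A) $A$ diagonalizable with multiplicatively independent eigenvalues, or (B) $A$ non-diagonalizable with $\rank\Lambda=k-1$, which forces a single Jordan block of size $2$ together with $k-1$ distinct multiplicatively independent eigenvalues. In either case pick $\alpha\in\bG_a^k(\bk)$ with every coordinate nonzero in the Jordan basis, and suppose a nonzero polynomial $F=\sum_m c_m x^m$ vanishes on $\OO_\Phi(\alpha)$. Expanding $F(\Phi^n\alpha)$ in $n$ yields an identity $\sum_\mu P_\mu(n)\mu^n=0$ for all $n\in\N_0$, with $\mu$ ranging over monomials in the eigenvalues and each $P_\mu\in\bk[n]$. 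Multiplicative independence of the eigenvalues makes the occurring $\mu$'s pairwise distinct, so the classical linear independence of the functions $n\mapsto n^j\mu^n$ for distinct $\mu\in\bk^*$ (a special case of Laurent's theorem~\cite{Inventiones}) forces each $P_\mu\equiv 0$. In case (A) this immediately reads off $c_m=0$; in case (B), within each fibre indexed by the total exponent along the two Jordan-block coordinates, an upper-triangular Vandermonde system recovers the remaining coefficients as zero, contradicting $F\neq 0$. The main technical obstacle will be the Jordan-block case analysis in (iii)$\Rightarrow$(i)---producing an explicit invariant in every combination of block sizes and multiplicative relations---while the dense-orbit direction in the $\bG_a^k$ setting requires only the elementary distinctness of exponential bases rather than the full strength of Laurent's theorem (the latter being needed for the $\bG_m^\ell$ components of the main Theorem~\ref{main result}).
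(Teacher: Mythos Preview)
Your proposal is correct and follows essentially the same route as the paper: reduce to Jordan form, build invariants case-by-case for (iii)$\Rightarrow$(i), and for the converse show the orbit of a point with nonzero coordinates is dense via non-vanishing of a non-degenerate linear recurrence. The only cosmetic differences are that the paper organizes the (iii)$\Rightarrow$(i) cases by block sizes directly (rather than first splitting on whether the block eigenvalues are multiplicatively dependent), writes the size-$\ge 3$ invariant as $\dfrac{2x_{m-2}}{x_m}-\dfrac{x_{m-1}^2}{x_m^2}+\dfrac{x_{m-1}}{\lambda x_m}$ instead of your $V-\binom{U}{2}$, and in Case~(B) precomposes with an automorphism $\Psi$ to replace $(\lambda_1^n+n\lambda_1^{n-1},\lambda_1^n,\dots)$ by $(n\lambda_1^n,\lambda_1^n,\dots)$ rather than expanding the binomial directly as you do.
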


\begin{proof}
Clearly, (i) $\Longrightarrow$ (ii). We will prove that (iii) $\Longrightarrow$ (i) and then that (ii) $\Longrightarrow$ (iii). First of all, using \cite[Lemma~5.4]{GS-abelian}, we may assume that $A$ is in Jordan (canonical) form. Strictly speaking, \cite[Lemma~5.4]{GS-abelian} proves that the Medvedev--Scanlon conjecture for abelian varieties is unaffected after replacing the given endomorphism by a conjugate of it through an automorphism; however, its proof goes verbatim for any endomorphism of any quasi-projective variety. Also, because the part~(iii) above is unaffected after replacing $A$ by its Jordan form, then from now on, we assume that $A$ is a Jordan matrix.

Now, assuming (iii) holds, we shall show that (i) holds. Indeed, if $A$ is diagonalizable, then since it has multiplicatively dependent eigenvalues $\lambda_1,\dots, \lambda_k$, i.e., there exist some integers $c_1,\dots, c_k$ not all equal to $0$ such that $\prod_{i=1}^k \lambda_i^{c_i}=1$, then $\Phi$ preserves the non-constant rational function 
$$f \colon \bG_a^k\lra \bP^1_\bk \text{ given by }f(x_1,\dots,x_k) = \prod_{i=1}^k x_i^{c_i},$$
as claimed. Now, assuming $A$ is not diagonalizable and it has at most $k-2$ multiplicatively independent eigenvalues, we will derive (i). There are $3$ easy cases to consider:

\begin{case}
$A$ has $k-2$ Jordan blocks of dimension $1$ and one Jordan block of dimension $2$ and moreover, the corresponding $k-1$ eigenvalues $\lambda_1,\dots, \lambda_{k-1}$ are multiplicatively dependent, i.e., there exist some integers $c_1,\dots, c_{k-1}$ not all equal to $0$ such that $\prod_{i=1}^{k-1} \lambda_i^{c_i}=1$.
Without loss of generality, we may assume that $\lambda_1$ corresponds to the unique Jordan block of dimension $2$. Namely,
\[A = J_{\lambda_1,2} \bigoplus \diag(\lambda_2, \dots, \lambda_{k-1}).\]
Then we conclude that $\Phi$ preserves a non-constant rational function
\[f \colon \bG_a^k\lra \bP^1_\bk \text{ given by }f(x_1,\dots,x_k) = \prod_{i=1}^{k-1} x_{i+1}^{c_i}.\]
\end{case}

\begin{case}
$A$ has at least two Jordan blocks of dimension $2$ each. Again, we may assume that the first two Jordan blocks of $A$ are given by $J_{\lambda_i, 2}$ with $i=1,2$ (it may happen that $\lambda_1 = \lambda_2$).
Then we see that $\Phi$ preserves the non-constant rational function $\bG_a^k\lra \bP^1_\bk$ given by
$$(x_1,\dots,x_k)\mapsto \frac{x_1}{\lambda_2x_2} - \frac{x_3}{\lambda_1x_4}.$$
(Note that $\lambda_1\lambda_2\ne 0$ because the endomorphism $\Phi$ is dominant and hence none of its eigenvalues equals $0$. This is also true in the following cases.) 
\end{case}

\begin{case}
$A$ has a Jordan block of dimension at least equal to $3$ which is denoted by $J_{\lambda, m}$ with $3\le m\le k$.
Clearly, it suffices to prove that the endomorphism $\varphi \colon \bG_a^m\lra\bG_a^m$ (induced by the action of $\Phi$ restricted on the generalized eigenspace corresponding to the eigenvalue $\lambda$) preserves a non-constant rational function. Note that the action of $\varphi$ is given by the Jordan matrix 
\[J_{\lambda,m} = \left(\begin{array}{ccccc}
\lambda & 1 & 0 & \cdots & 0\\
0 & \lambda & 1 & \cdots & 0\\
\vdots & \vdots & \ddots & \ddots & \vdots \\
0 & 0 & \cdots & \lambda & 1\\
0 & 0 & \cdots & 0 & \lambda\\
\end{array}\right).\]
We conclude that $\varphi$ preserves the non-constant rational function $f \colon \bG_a^m\lra \bP^1_\bk$ given by 
$$f(x_1,\dots, x_m) = \frac{2x_{m-2}}{x_m}-\frac{x_{m-1}^2}{x_m^2}+\frac{x_{m-1}}{\lambda x_m}.$$
\end{case}

Therefore, it remains to prove that if (ii) holds, then (iii) must follow. In order to prove this, we show that if $A$ is either diagonalizable with multiplicatively independent eigenvalues, or if $A$ has $k-2$ Jordan blocks of dimension $1$ and one Jordan block of dimension $2$ and moreover, the $k-1$ eigenvalues corresponding to these $k-1$ Jordan blocks are all multiplicatively independent, then there exists a $\bk$-point with a Zariski dense orbit. So, we have two more cases to analyze.

\begin{case}
\label{case:k-evs}
$A$ is diagonalizable with multiplicatively independent eigenvalues $\lambda_1,\dots, \lambda_k$.
In this case, we shall prove that the orbit of $\alpha \coloneqq (1,1,\dots, 1)$ is Zariski dense in $\bG_a^k$. Indeed, if there were a nonzero polynomial $F\in \bk[x_1,\dots, x_k]$ vanishing on the points of the orbit of $\alpha$ under $\Phi$, then we would have that $F(\lambda_1^n,\dots, \lambda_k^n) = F(\Phi^n(\alpha)) = 0$ for each $n\in\N_0$. Let
$$F(x_1,\dots, x_k) = \sum_{(i_1,\dots, i_k)} c_{i_1,\dots, i_k}\prod_{j=1}^k x_j^{i_j},$$
where the coefficients $c_{i_1,\dots, i_k}$'s are nonzero (and clearly, there are only finitely many of them appearing in the above sum). Then it follows that
$$\sum_{(i_1,\dots,i_k)} c_{i_1,\dots, i_k}\cdot \Lambda_{i_1,\dots, i_k}^n=0\text{ for each }n\in\N_0,$$
where $\Lambda_{i_1,\dots, i_k} \coloneqq \prod_{j=1}^k \lambda_j^{i_j}$. On the other hand, since for $(i_1,\dots, i_k)\ne (j_1,\dots, j_k)$ we know that $\Lambda_{i_1,\dots, i_k}/\Lambda_{j_1,\dots,j_k}$ is not a root of unity (because the $\lambda_i$'s are multiplicatively independent), $F(\Phi^n(\alpha))$ is a non-degenerate linear recurrence sequence (see \cite[Definition~3.1]{char p TAMS}). Hence \cite{Schmidt} (see also \cite[Proposition~3.2]{char p TAMS}) yields that as long as $F$ is not identically equal to $0$ (i.e., not all coefficients $c_{i_1,\dots, i_k}$ are equal to $0$), then there are at most finitely many $n\in\N_0$ such that $F(\Phi^n(\alpha))=0$, which is a contradiction. So, indeed, $\OO_\Phi(\alpha)$ is Zariski dense in $\bG_a^k$.
\end{case}

\begin{case}
\label{case:k-1-evs}
$A$ has $k-2$ Jordan blocks of dimension $1$ and one Jordan block of dimension $2$ and moreover, the corresponding $k-1$ eigenvalues $\lambda_1,\dots, \lambda_{k-1}$ are multiplicatively independent.
Without loss of generality, we may assume that
\[A = J_{\lambda_1,2} \bigoplus \diag(\lambda_2, \dots, \lambda_{k-1}) = \left(\begin{array}{ccccc}
\lambda_1 & 1 & 0 & \cdots & 0\\
0 & \lambda_1 & 0 & \cdots & 0\\
0 & 0 & \lambda_2 & \cdots & 0\\
\vdots & \vdots & \vdots & \ddots & \vdots \\
0 & 0 & 0 & \cdots & \lambda_{k-1}
\end{array}\right),\]
and so,
\[A^n = J_{\lambda_1,2}^n \bigoplus \diag(\lambda_2^n, \dots, \lambda_{k-1}^n) = \left(\begin{array}{ccccc}
\lambda_1^n & n\lambda_1^{n-1} & 0 & \cdots & 0\\
0 & \lambda_1^n & 0 & \cdots & 0\\
0 & 0 & \lambda_2^n & \cdots & 0\\
\vdots & \vdots & \vdots & \ddots & \vdots \\
0 & 0 & 0 &\cdots & \lambda_{k-1}^n
\end{array}\right).\]
We shall prove again that the orbit of $\alpha=(1,\dots, 1)$ under the action of $\Phi$ is Zariski dense in $\bG_a^k$. Let $\Psi \colon \bG_a^k\lra \bG_a^k$ be the automorphism given by 
$$(x_1,x_2,x_3,\dots, x_k)\mapsto (\lambda_1(x_1-x_2), x_2,x_3,\dots, x_k)$$
(note that all $\lambda_i$'s are nonzero because $\Phi$ is dominant). It suffices to prove that $\Psi(\OO_\Phi(\alpha))$ is Zariski dense in $\bG_a^k$. This is equivalent with proving that there is no nonzero polynomial $F\in \bk[x_1,\dots, x_k]$ vanishing on 
$$\Psi(\Phi^n(\alpha))=(n\lambda_1^n,\lambda_1^n,\lambda_2^n,\dots, \lambda_{k-1}^n).$$
So, letting $F(x_1,\dots, x_k) \coloneqq \sum_{(i_1,\dots, i_k)}c_{i_1,\dots, i_k}x_1^{i_1}\cdots x_k^{i_k}$, we get that 
\begin{equation}
\label{expanded form F}
F(\Psi(\Phi^n(\alpha))) = \sum_{(i_1,\dots, i_k)} c_{i_1,\dots, i_k}n^{i_1}\left(\lambda_1^{i_1+i_2}\cdot\lambda_2^{i_3}\cdots \lambda_{k-1}^{i_k}\right)^n=0.
\end{equation} 
Letting $\Lambda_{j_1,\dots, j_{k-1}} \coloneqq \lambda_1^{j_1}\cdot \lambda_2^{j_2}\cdots \lambda_{k-1}^{j_{k-1}}$, we can rewrite \eqref{expanded form F} as
\begin{equation}
\label{expanded form F 2}
\sum_{(j_1,\dots, j_{k-1})} Q_{j_1,\dots, j_{k-1}}(n)\cdot \Lambda_{j_1,\dots, j_{k-1}}^n=0,
\end{equation}
where 
$$Q_{j_1,\dots, j_{k-1}}(n) \coloneqq \sum_{\substack{i_1+i_2=j_1 \, \text{and} \\ i_3=j_2,\dots, i_k=j_{k-1}}}c_{i_1,i_2,i_3,\dots,i_k}n^{i_1}.$$
As in the previous Case \ref{case:k-evs}, the left-hand side of \eqref{expanded form F 2} represents the general term of a non-degenerate linear recurrence sequence (i.e., such that the quotient of any two of its distinct characteristic roots is not a root of unity). It follows from \cite{Schmidt} (see also \cite[Proposition~3.2]{char p TAMS}) that there are at most finitely many $n\in\N_0$ such that \eqref{expanded form F 2} holds, unless $F=0$ (i.e., each coefficient $c_{i_1,\dots, i_k}$ equals $0$). Therefore, $\Psi(\OO_\Phi(\alpha))$ is indeed Zariski dense in $\bG_a^k$ and hence so is $\OO_\Phi(\alpha)$.
\end{case}

This concludes our proof of Theorem~\ref{theorem G_a}.
\end{proof}

\begin{remark}
\label{rem:more precise}
We note that in Theorem~\ref{theorem G_a} we actually proved a stronger statement as follows. If $A$ is a Jordan matrix acting on $\bG_a^k$ and either it has $k$ multiplicatively independent eigenvalues, or it is not diagonalizable, but it still has $k-1$ multiplicatively independent eigenvalues, then there is no proper subvariety of $\bG_a^k$ which contains infinitely many points from the orbit of $(1,\dots, 1)$ under the action of $A$. So, not only that the orbit of $(1,\dots, 1)$ is Zariski dense in $\bG_a^k$, but any infinite subset of its orbit must also be Zariski dense in $\bG_a^k$. This strengthening is similar to the one obtained in \cite[Corollary~1.4]{AJM} for the action of any \'etale endomorphism of a quasi-projective variety (see also \cite{the book} for the connections of these results to the dynamical Mordell-Lang conjecture).
\end{remark}

The next result will be used in our proof of Theorem~\ref{main result}.
\begin{proposition}
\label{prop:bounded}
Let $A\in \bM_{\ell,\ell}(\Z)$ be a matrix with nonzero determinant, and let $\vv{p} \in \bM_{\ell,1}(\Z)$ be a nonzero vector. Let $c_1$ and $c_2$ be positive real numbers. If there exists an infinite set $S$ of positive integers such that for each $n\in S$, we have that $A^n\cdot \vv{p}$ is a vector whose entries are all bounded in absolute value by $c_1n+c_2$, then $A$ has an eigenvalue which is a root of unity.
\end{proposition}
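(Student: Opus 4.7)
The plan is to argue by contradiction: assume no eigenvalue of $A$ is a root of unity and use the polynomial-growth hypothesis to conclude $\vv{p}=0$, contradicting the assumption that $\vv{p}$ is nonzero. The two ingredients are a spectral decomposition over $\bC$ and Kronecker's theorem, which says that a nonzero algebraic integer all of whose Galois conjugates have absolute value at most $1$ must be a root of unity. Since $A\in\bM_{\ell,\ell}(\bZ)$ and $\det A\ne 0$, every eigenvalue of $A$ is a nonzero algebraic integer, so Kronecker's theorem will apply.

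First I would decompose $\bC^\ell=W\oplus U$, where $W$ is the sum of the generalized eigenspaces of $A$ for eigenvalues $\lambda$ with $|\lambda|>1$ and $U$ is the sum for $|\lambda|\le 1$, both $A$-stable, and write $\vv{p}=\vv{p}_W+\vv{p}_U$. Standard Jordan block estimates on $U$ give $\|A^n\vv{p}_U\|\le C_0(n+1)^k$ for some $k\ge 0$, so the hypothesis $\|A^n\vv{p}\|\le c_1 n+c_2$ on the infinite set $S$ yields $\|A^n\vv{p}_W\|\le C_1(n+1)^k$ for $n\in S$. On $W$, however, $(A|_W)^{-1}$ has all eigenvalues of absolute value $<1$ and hence spectral radius some $r<1$, so $\|(A|_W)^{-n}\|\le C_2 r^n$; applying $(A|_W)^{-n}$ to $A^n\vv{p}_W$ gives
\[\|\vv{p}_W\|\le C_2 r^n\cdot C_1(n+1)^k\longrightarrow 0\quad\text{as }n\to\infty\text{ along }S,\]
so $\vv{p}_W=0$. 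Thus $\vv{p}$ has trivial component in every generalized eigenspace for an eigenvalue of modulus $>1$.

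To finish, refine the decomposition into generalized eigenspaces $V_\mu\subset\bC^\ell$ over $\bQb$ indexed by the distinct eigenvalues $\mu$ of $A$, and write $\vv{p}=\sum_\mu \vv{p}_\mu$ with $\vv{p}_\mu\in V_\mu$. Since $\vv{p}\in\bZ^\ell$ is fixed by $\mathrm{Gal}(\bQb/\bQ)$ and the Galois action permutes the $V_\mu$ via $\sigma(V_\mu)=V_{\sigma\mu}$, we have $\sigma(\vv{p}_\mu)=\vv{p}_{\sigma\mu}$; in particular $\vv{p}_\mu$ and $\vv{p}_{\sigma\mu}$ vanish together. Combined with $\vv{p}_W=0$, this shows that whenever $\vv{p}_\mu\ne 0$ every Galois conjugate of $\mu$ has absolute value at most $1$, and Kronecker's theorem then forces $\mu$ to be a root of unity. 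Under our standing assumption no such $\mu$ exists, so every $\vv{p}_\mu=0$ and therefore $\vv{p}=0$, the contradiction we sought.

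The delicate step is precisely the bridge between the analytic conclusion $\vv{p}_W=0$ and the arithmetic conclusion. The decomposition $W\oplus U$ is typically not defined over $\bQ$ (for instance, the Fibonacci matrix has two eigenvalues that form a single Galois orbit, with moduli on opposite sides of $1$), so one genuinely needs the Galois-invariance of the integer vector $\vv{p}$, together with the fact that eigenvalues of $A$ are algebraic integers, in order to transfer the spectral information between conjugate eigenvalues and invoke Kronecker's theorem.
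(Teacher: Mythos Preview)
Your proof is correct and follows the same strategy as the paper's: use the spectral/Jordan decomposition together with the growth bound to see that $\vv{p}$ has no component along any generalized eigenspace for an eigenvalue of modulus $>1$, then exploit that $\vv{p}\in\bZ^\ell$ to transport this conclusion across Galois conjugates and invoke Kronecker's theorem. The only difference is cosmetic: you use a spectral-radius estimate on $(A|_W)^{-1}$ and the permutation $\sigma(\vv{p}_\mu)=\vv{p}_{\sigma\mu}$ of the generalized-eigenspace components, whereas the paper isolates one Jordan block, bounds $|\lambda_1|$ by an explicit entry computation (Lemma~\ref{lem:bounded}), and then reruns that lemma with $B$ replaced by $B^\sigma$ for each $\sigma\in\Gal(\Qbar/\Q)$.
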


\begin{proof}
Let $B\in \bM_{\ell,\ell}(\Qbar)$ be an invertible matrix such that $J\coloneqq BAB^{-1}$ is the Jordan canonical form of $A$. For each $n\in \N$, let $\vv{p_n}\coloneqq A^n\cdot \vv{p}$ and $\vv{q_n}\coloneqq B\cdot \vv{p_n}$. So, we know that each entry in $\vv{p_n}$ is an integer bounded in absolute value by $c_1n+c_2$ for any $n \in S \subseteq \N$.
Then, according to our hypotheses, there exist some positive constants $c_3$ and $c_4$ such that each entry in $\vv{q_n}$ is bounded in absolute value by $c_3n+c_4$. Furthermore, for any $\sigma\in\Gal(\Qbar/\Q)$, denoting by $\vv{v}^\sigma$ the vector obtained by applying $\sigma$ to each entry of the vector $\vv{v}\in \bM_{\ell,1}(\Qbar)$, we have that each entry in $\vv{q_n}^\sigma$ is bounded by $c_5n+c_6$ for some positive constants $c_5$ and $c_6$ which are independent of $n$ and $\sigma$. Indeed, this claim follows from the observation that $\vv{q_n}^\sigma=B^\sigma\cdot \vv{p_n}$, because $\vv{p_n}$ has integer entries (since both $\vv{p}$ and $A$ have integer entries) and moreover, the entries in $\vv{p_n}$ are all bounded in absolute value by $c_1n+c_2$.

Denote by $\ell_1, \dots, \ell_m$ the dimensions of the Jordan blocks of $J$ in the order as they appear in the matrix $J$ (so, $\ell = \ell_1+\cdots +\ell_m$). Let $\vv{q}\coloneqq B\cdot \vv{p}$. Since $\vv{p}\ne \vv{0}$ and $B$ is invertible, then $\vv{q}$ is not the zero vector either. Without loss of generality, we may assume that one of the first $\ell_1$ entries in $\vv{q}$ is nonzero. Next, we will prove that the eigenvalue of $J$ corresponding to its first Jordan block (of dimension $\ell_1$) must have absolute value at most equal to $1$. We state and prove our result from Lemma~\ref{lem:bounded} in much higher generality than needed since it holds for any valued field $(L,|\cdot |)$ (our application will be for $L=\Qbar$ equipped with the usual archimedean absolute value $|\cdot |$).

\begin{lemma}
\label{lem:bounded}
Let $(L,|\cdot |)$ be an arbitrary valued field, let $J_{\lambda_1, r}\in \bM_{r, r}(L)$ be a Jordan block of dimension $r\ge 1$ corresponding to a nonzero eigenvalue $\lambda_1$, and let $\vv{v}\in \bM_{r, 1}(L)$ be a nonzero vector. If there exist positive constants $c_5$, $c_6$, and an infinite set $S_1$ of positive integers such that for each $n\in S_1$, we have that each entry in $J_{\lambda_1, r}^n\cdot \vv{v}$ is bounded in absolute value by $c_5n+c_6$, then $|\lambda_1|\le 1$. 
\end{lemma}

\begin{proof}[Proof of Lemma~\ref{lem:bounded}] \renewcommand{\qedsymbol}{}
Let $s$ be the largest integer with the property that the $s$-th entry $v_s$ in $\vv{v}$ is nonzero; so, $1\le s\le r$. Then for each $n\in S_1$, we have that the $s$-th entry in $J_{\lambda_1, r}^n\cdot \vv{v}$ is $v_s\lambda_1^n$ and hence according to our hypothesis, we have
\begin{equation}
\label{bounded lambda 1}
\left|v_s\lambda_1^n\right| \le c_5n+c_6.
\end{equation}
Since $v_s\ne 0$ and equation \eqref{bounded lambda 1} holds for each $n$ in the infinite set $S_1$, we conclude that $|\lambda_1|\le 1$, as desired. Thus, the lemma follows.
\end{proof}

So, our assumptions coupled with Lemma~\ref{lem:bounded} yield that the eigenvalue $\lambda_1$ corresponding to the first Jordan block of the matrix $J$ has absolute value at most equal to $1$. Furthermore, as previously explained, for each $n\in S$ and for each $\sigma\in\Gal(\Qbar/\Q)$, we have that each entry in
$$\vv{q_n}^\sigma = (B\cdot \vv{p_n})^\sigma = \left(BA^n\cdot \vv{p}\right)^\sigma = \left(J^n\cdot \vv{q}\right)^\sigma = \left(J^\sigma\right)^n\cdot \vv{q}^\sigma$$
is bounded in absolute value by $c_5n+c_6$. Thus, applying again Lemma~\ref{lem:bounded}, this time to the first Jordan block of the matrix $J^\sigma$, we obtain that $|\sigma(\lambda_1)|\le 1$.

Now, $\lambda_1$ is an algebraic integer (since it is the eigenvalue of a matrix with integer entries) and for each $\sigma\in\Gal(\Qbar/\Q)$, we have that $|\sigma(\lambda_1)|\le 1$. Because the product of all the Galois conjugates of $\lambda_1$ must be a nonzero integer, we conclude that actually $|\sigma(\lambda_1)|=1$ for each $\sigma\in\Gal(\Qbar/\Q)$. Then a classical lemma from algebraic number theory yields that $\lambda_1$ must be a root of unity, as desired.
\end{proof}

Now we are ready to prove our main theorem stated in the introduction.

\begin{proof}[Proof of Theorem~\ref{main result}.]
Because $G$ is a connected commutative linear algebraic group defined over an algebraically closed field $\bk$ of characteristic $0$, then $G$ is isomorphic to $\bG_a^k\times \bG_m^\ell$ for some $k,\ell\in\N_0$. Since there are no nontrivial homomorphisms between $\bG_a$ and $\bG_m$, then $\Phi$ splits as $\Phi_1\times \Phi_2$, where $\Phi_1$ and $\Phi_2$ are dominant endomorphisms of $\bG_a^k$ and $\bG_m^\ell$, respectively. So, our conclusion follows once we prove the following statement: if neither $\Phi_1$ nor $\Phi_2$ preserve any non-constant rational function, then there exists a point $\alpha\in (\bG_a^k\times \bG_m^\ell)(\bk)$ with a Zariski dense orbit under $\Phi$.

Thus, we assume that $\Phi_1$ and $\Phi_2$ do not preserve any non-constant rational function. In particular, this means that the action of $\Phi_2$ on the tangent space of the identity of $\bG_m^\ell$ is given through a matrix $A_2$ whose eigenvalues are not roots of unity (since otherwise one may argue as in the proof of \cite[Lemma~6.2]{GS-abelian} or \cite[Lemma~4.1]{GS-semiabelian} that $\Phi_2$ preserves a non-constant fibration which is not the case). Also, our Theorem~\ref{theorem G_a} yields that either the matrix $A_1$ (which represents $\Phi_1$) is diagonalizable with multiplicatively independent eigenvalues, or the Jordan canonical form of $A_1$ has $k-2$ blocks of dimension $1$ and one block of dimension $2$ such that the $k-1$ eigenvalues are multiplicatively independent. Next, we will analyze in detail the second possibility for $A_1$ (when there is a Jordan block of dimension $2$), since the former possibility (when $A_1$ is diagonalizable with multiplicatively independent eigenvalues) turns out to be a special case of the latter one.

Arguing as in the proof of Theorem~\ref{theorem G_a}, at the expense of replacing $\Phi_1$ and therefore $\Phi$ by a conjugate through an automorphism, we may assume that $A_1$ is a Jordan matrix of the form
\[A_1 = J_{\lambda_1,2} \bigoplus \diag(\lambda_2, \dots, \lambda_{k-1}) = \left(\begin{array}{ccccc}
\lambda_1 & 1 & 0 & \cdots & 0\\
0 & \lambda_1 & 0 & \cdots & 0\\
0 & 0 & \lambda_2 & \cdots & 0\\
\vdots & \vdots & \vdots & \ddots & \vdots \\
0 & 0 & 0 & \cdots & \lambda_{k-1}
\end{array}\right),\]
where $\lambda_1,\dots,\lambda_{k-1}$ are multiplicatively independent eigenvalues. We will prove that there exists a point $\alpha\in (\bG_a^k\times\bG_m^\ell)(\bk)$ with a Zariski dense orbit. Suppose that we have proved it for the time being. Then restricting the action of $\Phi_1$ (and thus of $A_1$) to the last $k-1$ coordinate axes of $\bG_a^k$, we obtain a diagonal matrix with multiplicatively independent eigenvalues. Letting $\hat\pi_1$ be the projection of $\bG_a^k$ to $\bG_a^{k-1}$ with the first coordinate omitted, we obtain a point $\gamma \coloneqq (\hat\pi_1 \times \id_{\bG_m^\ell})(\alpha)$ whose orbit under the induced endomorphism of $\bG_a^{k-1}\times \bG_m^\ell$ is Zariski dense. This justifies our earlier claim that it suffices to consider the case of a non-diagonalizable linear action $\Phi_1$ since the diagonal case reduces to this more general case.

Let $\alpha\coloneqq (\alpha_1,\dots, \alpha_k,\beta_{1},\dots, \beta_{\ell})\in (\bG_a^k\times\bG_m^\ell)(\Qbar)$ such that $\alpha_1 = \cdots = \alpha_k=1$, while $\lambda_1,\dots, \lambda_{k-1}$, $\beta_{1},\dots, \beta_{\ell}$ are all multiplicatively independent. We will prove that $\OO_\Phi(\alpha)$ is Zariski dense in $\bG_a^k\times \bG_m^\ell$. Since $\lambda_1,\dots, \lambda_{k-1}$ are multiplicatively independent elements of $\bk$ (which is an algebraically closed field containing $\Qbar$), without loss of generality, we may assume that each $\lambda_i\in\Qbar$. This follows through a standard specialization argument as shown in \cite[Section~5]{Masser} (see also \cite[p.~39]{Zannier}); one can actually prove that there are infinitely many specializations which would yield multiplicatively independent $\lambda_1,\dots, \lambda_{k-1},\beta_1,\dots, \beta_\ell$. (Note that if the orbit of $\alpha$ under the action of a specialization of $\Phi$ has a Zariski dense orbit, then $\OO_\Phi(\alpha)$ must itself be Zariski dense in $\bG_a^k\times \bG_m^\ell$.)

Now, suppose to the contrary that there is a hypersurface $Y$ (not necessarily irreducible) of $\bG_a^k\times \bG_m^\ell$ containing $\OO_\Phi(\alpha)$. Similar to the proof of Theorem~\ref{theorem G_a} (see the Case~\ref{case:k-1-evs}), considering the birational automorphism $\Psi_1 \colon \bG_a^k\dashrightarrow \bG_a^k$ given by
$$(x_1,x_2,x_3,\dots,x_k)\mapsto \left(\frac{\lambda_1(x_1-x_2)}{x_2}, x_2,x_3,\dots, x_k\right),$$
which extends to a birational automorphism $\Psi\coloneqq \Psi_1\times \id_{\bG_m^\ell}$ of $\bG_a^k\times \bG_m^\ell$, we see that $\Psi(Y)$ is a hypersurface of $\bG_a^k\times \bG_m^\ell$ containing $\Psi(\OO_\Phi(\alpha))$. In particular, this yields that there exists some nonzero polynomial $F\in \Qbar[x_1,\dots, x_{k+\ell}]$ (since the entire orbit of $\alpha$ is defined over $\Qbar$) vanishing at the following set of $\Qbar$-points:
\begin{equation}
\label{general form point}
\Psi(\OO_\Phi(\alpha)) = \left\{(n,\lambda_1^n,\lambda_2^n,\dots, \lambda_{k-1}^n,\beta_{n,1},\beta_{n,2},\dots, \beta_{n,\ell}) \in (\bG_a^k \times \bG_m^\ell)(\Qbar) : n\in \N_0 \right\},
\end{equation}
where $(\beta_{n,1},\dots, \beta_{n,\ell})\coloneqq \Phi_2^n(\beta_1,\dots, \beta_\ell)$. So, letting $\left\{m^{(n)}_{i,j}\right\}_{1\le i,j\le \ell}$ be the entries of the matrix $A_2^n$, then the point $\Phi_2^n(\beta_1,\dots, \beta_\ell)\in \bG_m^\ell(\Qbar)$ equals
$$\left(\prod_{j=1}^\ell \beta_j^{m^{(n)}_{1,j}}, \dots , \prod_{j=1}^\ell \beta_j^{m^{(n)}_{\ell,j}}\right),$$
or alternatively, we can write it as $\beta^{A_2^n}$, where $\beta\coloneqq (\beta_1,\dots, \beta_\ell)\in \bG_m^\ell(\Qbar)$. More generally, for a matrix $M\in \bM_{\ell,\ell}(\Z)$ and some $\gamma\coloneqq (\gamma_1,\dots,\gamma_\ell)\in \bG_m^\ell(\Qbar)$, we let $\gamma^M$ be $\varphi(\gamma)$, where $\varphi \colon \bG_m^\ell\lra \bG_m^\ell$ is the endomorphism corresponding to the matrix $M$ (with respect to the action of $\varphi$ on the tangent space of the identity of $\bG_m^\ell$). Furthermore, for any $\vv{a}\coloneqq (a_1,\dots, a_\ell)\in\Z^\ell$, we let $\gamma^{\vv{a}}\in\bG_m(\Qbar)$ be $\prod_{i=1}^\ell \gamma_i^{a_i}$. 

We also write 
$$F(x_1,\dots, x_{k+\ell}) = \sum_{(i_1,\dots, i_{k+\ell})}c_{i_1,\dots, i_{k+\ell}}x_1^{i_1}\cdots x_{k+\ell}^{i_{k+\ell}},$$
where each coefficient $c_{i_1,\dots, i_{k+\ell}}$ is nonzero so that it is a finite sum. We denote $\vv{i_{2,\dots, k}}\coloneqq (i_2,\dots,i_k)\in\Z^{k-1}$, $\vv{i_{k+1,\dots, k+\ell}}\coloneqq (i_{k+1},\dots, i_{k+\ell})\in \Z^{\ell}$, and $\Lambda\coloneqq (\lambda_1,\dots, \lambda_{k-1})\in \bG_m^{k-1}(\Qbar)$. Note that the $\lambda_i$'s are nonzero since $\Phi_1$ is a dominant endomorphism. Let $M\coloneqq (m_{r,s})\in \bM_{\ell,\ell}(\Z)$ be a matrix of integer variables and consider the polynomial-exponential equation
\begin{equation}
\label{the equation}
\sum_{(i_2,\dots, i_{k+\ell})}\left(\sum_{i_1}c_{i_1,\dots, i_{k+\ell}}n^{i_1}\right)\cdot \left( \Lambda^{\vv{i_{2,\dots, k}}} \right)^n \cdot \beta^{\,\vv{i_{k+1,\dots, k+\ell}}\,\cdot M}=0;
\end{equation}
in particular, $\beta^{\,\vv{i_{k+1,\dots, k+\ell}}\,\cdot M}$ equals
$$\prod_{s=1}^\ell \beta_s^{\sum_{r=1}^\ell i_{k+r}m_{r,s}} = \prod_{r,s=1}^\ell \beta_s^{i_{k+r}m_{r,s}}.$$
With the notation as in \eqref{the equation}, we let 
$$Q_{\vv{i_{2,\dots, k+\ell}}}(n) \coloneqq \sum_{i_1}c_{i_1,\dots, i_{k+\ell}}n^{i_1}.$$ 
So, the polynomial-exponential equation \eqref{the equation} has $\ell^2+1$ integer variables; denoting $\Lambda_{i_2,\dots, i_k} \coloneqq \Lambda^{\vv{i_{2,\dots, k}}}$, we have 
\begin{equation}
\label{the equation 2}
\sum_{(i_2,\dots, i_{k+\ell})} Q_{\vv{i_{2,\dots, k+\ell}}}(n) \cdot \Lambda_{i_2,\dots, i_k}^n \cdot \prod_{r,s=1}^\ell \left(\beta_s^{i_{k+r}}\right)^{m_{r,s}}=0.
\end{equation}

We are going to apply \cite[Th\'eor\`eme~6]{Inventiones}. Note that each $n\in\N_0$ for which $$F(\Psi(\Phi^n(\alpha)))=0$$ yields an integer solution $\left(n,\left(m_{i,j}^{(n)}\right)_{1\le i,j\le \ell}\right)$ of the equation \eqref{the equation 2}.
Now, for each $n\in\N_0$, we let $\cP_n$ be a maximal compatible partition of the set of indices $(i_2,\dots, i_{k+\ell})$ in the sense of Laurent (see \cite[p.~320]{Inventiones}) with the property that for each part $I$ of the partition $\cP_n$, we have that
\begin{equation}
\label{be in I}
\sum_{(i_2,\dots, i_{k+\ell})\in I} Q_{\vv{i_{2,\dots, k+\ell}}}(n) \cdot \Lambda_{i_2,\dots, i_k}^n \cdot \prod_{r,s=1}^\ell \left(\beta_s^{i_{k+r}}\right)^{m^{(n)}_{r,s}}=0.
\end{equation}
Since there are only finitely many partitions of the finite index set of all $(i_2,\dots, i_{k+\ell})$, we fix some partition $\cP$ for which we assume that there exists an infinite set $S$ of positive integers $n$ such that $\cP\coloneqq \cP_n$. Then we define $\cH_\cP$ as the subgroup of $\Z^{\ell^2+1}$ consisting of all $\left(n, \left(m_{i,j}^{(n)}\right)_{1\le i,j\le \ell}\right)$ such that for each part $I$ of the partition $\cP$ and for any two indices $\vv{i}\coloneqq (i_2,\dots, i_{k+\ell})$ and $\vv{j}\coloneqq (j_2,\dots, j_{k+\ell})$ contained in $I$, we have that
\begin{equation}
\label{be in H}
\Lambda_{i_2,\dots, i_k}^n \cdot \prod_{r,s=1}^\ell \left(\beta_s^{i_{k+r}}\right)^{m^{(n)}_{r,s}} = \Lambda_{j_2,\dots, j_k}^n \cdot \prod_{r,s=1}^\ell \left(\beta_s^{j_{k+r}}\right)^{m^{(n)}_{r,s}}.
\end{equation}
Then \cite[Th\'{e}or\`{e}me~6]{Inventiones} yields that we can write the solution $\left(n, \left(m^{(n)}_{i,j}\right)_{1\le i, j\le \ell}\right)$ as $\vv{N_0}(n)+\vv{N_1}(n)$, where $\vv{N_0} \coloneqq \vv{N_0}(n), \, \vv{N_1} \coloneqq \vv{N_1}(n)\in\Z^{1+\ell^2}$ and moreover, $\vv{N_0}\in \cH_\cP$ while the absolute value of each entry in $\vv{N_1}$ is bounded above by $C_1\log(U_n)+C_2$, where $C_1$ and $C_2$ are some positive constants independent of $n$, and
$$U_n\coloneqq \max\left\{n, \max_{1\le i,j\le \ell} \big|m^{(n)}_{i,j}\big|\right\}.$$
A simple computation for $A_2^n = \left(m^{(n)}_{i,j}\right)_{1\le i,j\le \ell}$ yields that there exists a positive constant $C_3$ such that $U_n\le C_3^n$ for all $n\in\N$. We then conclude that each entry in $\vv{N_1}$ is bounded in absolute value by $C_4n+C_5$, for some absolute constants $C_4$ and $C_5$ independent of $n$. Next, we will determine the subgroup $\cH_\cP$ of $\Z^{1+\ell^2}$.

We may first assume that at least one part $I$ of the partition $\cP$ satisfies the property that $\pi(I)$ has at least $2$ elements, where $\pi \colon \Z^{k+\ell-1}\lra \Z^\ell$ is the projection on the last $\ell$ coordinates, i.e., $(i_2, \dots, i_{k+\ell}) \mapsto (i_{k+1}, \dots, i_{k+\ell})$.
Indeed, if $\#(\pi(I))=1$ for each part $I$ of $\cP$, then equation \eqref{be in I} would actually yield that 
\begin{equation}
\label{be in I 2}
\sum_{(i_2,\dots, i_{k+\ell})\in I} Q_{\vv{i_{2,\dots, k+\ell}}}(n) \cdot \Lambda_{i_2,\dots, i_k}^n = 0.
\end{equation}
Thus, since \eqref{be in I 2} holds for each part $I$ of $\cP$, we would get that there exists a proper subvariety of $\bG_a^k\times \bG_m^\ell$ of the form $Z\times \bG_m^\ell$ containing infinitely many points of $\Psi(\OO_\Phi(\alpha))$. In particular, $Z$ would be a proper subvariety of $\bG_a^k$ containing infinitely many points of $\Psi_1(\OO_{\Phi_1}(1,\dots,1))$, which contradicts the proof of Theorem~\ref{theorem G_a} (see also Remark~\ref{rem:more precise}). Therefore, we may indeed assume that there exists at least one part $I$ of $\cP$ such that $\pi(I)$ contains at least two distinct elements $(i_{k+1},\dots, i_{k+\ell})$ and $(j_{k+1},\dots, j_{k+\ell})$. 

Let $\vv{N_0}\coloneqq \left(n_0,\left(m^{(n)}_{0,i,j}\right)_{1\le i,j\le \ell}\right)$.
Since $\vv{N_0} \in \cH_\cP$, by the definition of $\cH_\cP$,
we apply \eqref{be in H} to $\vv{N_0}$ and to $(i_2,\dots, i_{k+\ell}), \, (j_2,\dots, j_{k+\ell})\in I$ for which $(i_{k+1},\dots, i_{k+\ell})\ne (j_{k+1},\dots, j_{k+\ell})$ and get that 
\begin{equation}
\label{be in H 2}
\Lambda_{i_2,\dots, i_k}^{n_0} \cdot \prod_{r,s=1}^\ell \left(\beta_s^{i_{k+r}}\right)^{m^{(n)}_{0,r,s}} = \Lambda_{j_2,\dots, j_k}^{n_0} \cdot \prod_{r,s=1}^\ell \left(\beta_s^{j_{k+r}}\right)^{m^{(n)}_{0,r,s}}.
\end{equation}
Using the fact that $\Lambda_{i_2,\dots, i_k} = \prod_{t=1}^{k-1}\lambda_t^{i_{t+1}}$ and that $\lambda_1,\dots, \lambda_{k-1},\beta_1,\dots, \beta_{\ell}$ are multiplicatively independent, equation \eqref{be in H 2} yields that 
\begin{equation}
\label{be in H 3}
\sum_{r=1}^\ell i_{k+r}m^{(n)}_{0,r,s} = \sum_{r=1}^\ell j_{k+r}m^{(n)}_{0,r,s} \text{ for any } 1\le s\le \ell.
\end{equation}
Denote $M_n^0\coloneqq \left(m^{(n)}_{0,r,s}\right)_{1\le r,s\le \ell}$ and also let $\vv{p}\coloneqq (i_{k+1}-j_{k+1},\dots, i_{k+\ell}-j_{k+\ell})^t \in \bM_{\ell,1}(\Z)$. Then we may write equation \eqref{be in H 3} as $\vv{p}^t\cdot M_n^0 = \vv{0}$.

Let $\vv{N_1}\coloneqq \left(n_1,\left(m^{(n)}_{1,r,s}\right)_{1\le r,s\le \ell}\right)$ and denote $M_n^1\coloneqq \left(m^{(n)}_{1,r,s}\right)_{1\le r,s\le \ell}$. Then we have $A_2^n=M_n^0+M_n^1$ for each $n\in S$, i.e., $m^{(n)}_{r,s}=m^{(n)}_{0,r,s}+m^{(n)}_{1,r,s}$ for each $1\le r,s\le \ell$. Using that $\vv{p}^t\cdot M_n^0 = \vv{0}$, we obtain that for each $n\in S$ we have 
\begin{equation}
\label{impossible equation 1}
\vv{p}^t\cdot A_2^n = \vv{p}^t\cdot M_n^1, \, \text{or equivalently, }(A_2^t)^n\cdot \vv{p}=(M_n^1)^t\cdot\vv{p},
\end{equation}
where $D^t$ always represents the transpose of the matrix $D$. 
Using the fact that each entry in $(M_n^1)^t$ is bounded in absolute value by $C_4n+C_5$, we obtain that each entry of the vector 
\begin{equation}
\label{vv equation}
\vv{p_n}\coloneqq (A_2^t)^n \cdot \vv{p} = (M_n^1)^t\cdot \vv{p} 
\end{equation}
is also bounded in absolute value by $C_6n+C_7$ (again for some positive constants $C_6$ and $C_7$ independent of $n$). Note that $\vv{p}\ne\vv{0}$ and \eqref{vv equation} holds for all $n$ in the infinite set $S$ of positive integers. It follows from Proposition~\ref{prop:bounded} that one of the eigenvalues of $A_2$ must be a root of unity, which contradicts our assumption on $A_2$ at the beginning of the proof.

This concludes our proof of Theorem~\ref{main result}.
\end{proof}


\phantomsection
\addcontentsline{toc}{section}{Acknowledgments}
\noindent
\textbf{Acknowledgments. }
We thank Zinovy Reichstein, Tom Scanlon and Umberto Zannier for useful discussions regarding this topic.
We are also grateful to the referee for helpful comments.


\end{document}